\newcommand{\autorefcheckize}[1]{%
  \expandafter\let\csname @@\string#1\endcsname#1%
  \expandafter\DeclareRobustCommand\csname relax\string#1\endcsname[1]{%
    \csname @@\string#1\endcsname{##1}\wrtusdrf{##1}}%
  \expandafter\let\expandafter#1\csname relax\string#1\endcsname
}
\theoremstyle{plain}
\newtheorem{theorem}{Theorem}[section]
\newaliascnt{lem}{theorem}
\newtheorem{lem}[lem]{Lemma}
\newaliascnt{cor}{theorem}
\newaliascnt{prop}{theorem}
\newtheorem{prop}[prop]{Proposition}
\theoremstyle{remark}
\newtheorem{rem}{Remark}[section]
\newtheorem*{claim}{Claim}
\theoremstyle{definition}
\newtheorem{defn}{Definition}[section]
\newtheorem{eg}{Example}[section]
\numberwithin{equation}{section}
\newcommand{\abs}[1]{\left\lvert#1\right\rvert}
\newcommand{\set}[1]{\left\{#1\right\}}
\newcommand{\hin}[2]{\left\langle#1,#2\right\rangle}
\newcommand*{\Rmn}[1]{\uppercase\expandafter{\romannumeral#1}}
\newcommand*{\dif}{\mathop{}\!\mathrm{d}}
\DeclareMathOperator{\Div}{div}
\DeclareMathOperator{\trace}{trace}
\begin{document}

\title{Willmore Legendrian surfaces in $\mathbb{S}^5$ are minimal Legendrian surfaces}


\author[Yong Luo]{Yong Luo}
\address[Yong Luo]{School of Mathematics and Statistics \& Computational Science Hubei Key Laboratory, Wuhan University, Wuhan, 430072, China}
\email{yongluo@whu.edu.cn}

\author[Linlin Sun]{Linlin Sun}
\address[Linlin Sun]{School of Mathematics and Statistics \& Computational Science Hubei Key Laboratory, Wuhan University, Wuhan, 430072, China}
\address[Linlin Sun]{Max Planck Institute for Mathematics in the Sciences, Inselstrasse 22, 04103 Leipzig, Germany}
\email{sunll@whu.edu.cn}

\thanks{This work was partially supported by the NSF of China (Nos. 11501421, 11801420). The second author thanks the Max Planck Institute for Mathematics in the Sciences for good working conditions when this work carried out.}

\subjclass[2010]{55C24, 53C42, 53C44}

\keywords{Willmore Legendrian surface, csL surface, csL Willmore surface}

\date{}


\begin{abstract}

In this paper we continue to consider Willmore Legendrian surfaces and csL Willmroe surfaces in $\mathbb{S}^5$, notions introduced by Luo in \cite{Luo}. We will prove that every complete Willmore Legendrian surface in $\mathbb{S}^5$ is minimal and construct nontrivial examples of csL Willmore surfaces in $\mathbb{S}^5$.

\end{abstract}

\maketitle


\section{Introduction}



Let $\Sigma$ be a Riemann surface, $(M^n,g)=\mathbb{S}^n$ or $\mathbb{R}^n(n\geq3)$ the unit sphere or the Euclidean space with standard metrics and $f$ an immersion from $\Sigma$ to $M$. Let $B$ be the second fundamental form of $f$ with respect to the induced metric, $H$ the mean curvature vector field of $f$ defined by
$$H=\trace B,$$
$\kappa_M$  the Gauss curvature of $df(T\Sigma)$ with respect to the ambient metric $g$ and $\dif\mu_f$  the area element on $f(\Sigma)$. The Willmore functional of the immersion $f$ is then defined by
\begin{align*}
W(f)=\int_{\Sigma}\left(\dfrac{1}{4}\abs{H}^2+\kappa_M\right)\dif\mu_f,
\end{align*}

For a smooth and compactly supported variation $f: \Sigma\times I\mapsto M$ with $\phi=\partial_tf$ we have the following first variational formula (cf.~\cites{Tho, Wei})
\begin{eqnarray*}
\dfrac{d}{dt}W(f)=\int_\Sigma\hin{ \overrightarrow{W}(f)}{\phi} \dif\mu_f,
\end{eqnarray*}
with $\overrightarrow{W}(f)=\sum_{\alpha=3}^n\overrightarrow{W}(f)^\alpha e_\alpha$, where $\{e_\alpha:3\leq\alpha\leq n\}$ is a local orthonormal frame of the normal bundle of $f(\Sigma)$ in $M$ and
\begin{eqnarray*}
\overrightarrow{W}(f)^\alpha=\dfrac12\left(\Delta H^\alpha+\sum_{i,j,\beta}h_{ij}^\alpha h_{ij}^\beta H^\beta-2\abs{H}^2H^\alpha\right)
,\quad 3\leq\alpha\leq n,
\end{eqnarray*}
where $h_{ij}^\alpha$ is the component of $B$ and $H^\alpha$ is the trace of $\left(h_{ij}^\alpha\right)$.

A smooth immersion $f:\Sigma\mapsto  M$ is called a {\it Willmore immersion}, if it is a critical point of the Willmore functional $W$. In other words, $f$ is a Willmore immersion if and only if it satisfies
\begin{eqnarray}\label{ELE}
\Delta H^\alpha+\sum_{i,j,\beta}h_{ij}^\alpha h_{ij}^\beta H^\beta-2\abs{H}^2H^\alpha=0, \quad 3\leq\alpha\leq n.
\end{eqnarray}

When $(M,g)=\mathbb{R}^3$, Willmore \cite{Wil} proved that the Willmore energy of closed surfaces are larger than or equal to $4\pi$ and equality holds only for round spheres. When $\Sigma$ is a torus, Willmore conjectured that the minimum is $2\pi^2$ and it is attained only by the Clifford torus, up to a conformal transformation of $\mathbb{R}^3$ \cites{Whi,Chen}, which was verified by Marques and Neves in \cite{MN}. When $(M,g)=\mathbb{R}^n$, Simon \cite{Sim}, combined with the work of Bauer and Kuwert \cite{BaK}, proved the existence of an embedded surface which minimizes the Willmore functional among closed surfaces of prescribed genus. Motivated by these mentioned papers, Minicozzi \cite{Mi} proved the existence of an embedded torus which minimizes the Willmore functional in a smaller class of Lagrangian tori in $\mathbb{R}^4$. In the same paper Minicozzi conjectured that the Clifford torus minimizes the Willmore functional in its Hamiltonian isotropic class, which he verified has a close relationship with Oh's conjecture \cites{Oh90,Oh93}. We should also mention that before Minicozzi, Castro and Urbano proved that the Whitney sphere in $\mathbb{R}^4$ is the only minimizer for the Willmore functional among closed Lagrangian sphere. This result was further generalized by Castro and Urbano in \cite{CU93} where they proved that the Whitney sphere is the only closed Willmore Lagrangian sphere (a Lagrangian sphere which is also a Willmore surface) in $\mathbb{R}^4$. Examples of Willmore Lagrangian tori (Lagrangian tori which also are Willmroe surfaces) in $\mathbb{R}^4$ were constructed by Pinkall \cite{Pin} and Castro and Urbano \cite{CU}. Motivatied by these works, Luo and Wang \cite{LW} considered the variation of the Willmore functional among Lagrangian surfaces in $\mathbb{R}^4$ or variation of a Lagrangian surface of the Willmore functional among its Hamiltonian isotropic class in $\mathbb{R}^4$, whose critical points are called LW or HW surfaces respectively. We should also mention that Willmroe type functional of Lagrangian surfaces in $\mathbb{CP}^2$ were studied by Montiel and Urbano \cite{MU} and Ma, Mironov and Zuo \cite{MMZ}.

Inspired by the study of the Willmore functional for Lagrangian surfaces in $\mathbb{R}^4$, Luo \cite{Luo} naturally considered the Willmore functional of Legendrian surfaces in $\mathbb{S}^5$. 

\begin{defn}
A Willmore and Legendrian surface in $\mathbb{S}^5$ is called \textbf{a Willmore Legendrian surface}.
\end{defn}

\begin{defn} A Legendrian surface in $\mathbb{S}^5$ is called \textbf{a contact stationary Legendrian Willmore surface} (in short, a csL Willmore surface) if it is a critical point of the Willmore functional under contact deformations.
\end{defn}
Luo \cite{Luo} proved that Willmore Legendrian surfaces in $\mathbb{S}^5$ are csL surfaces (see \autoref{CSL}). In this paper, we continue to study Willmore Legendrian surfaces and csL Willmore surfaces in $\mathbb{S}^5$. Surprisingly we will prove that every complete Willmore Legendrian surface in $\mathbb{S}^5$ must be a minimal surface (\autoref{main thm1}).  We will also construct nontrivial examples of csL Willmore surfaces from csL surfaces in $\mathbb{S}^5$ for the first time, by exploring relationships between them (\autoref{pro1}).

The method here we used  to construct nontrivial csL Willmore surfaces in $\mathbb{S}^5$ in Section 4 should also be useful in constructing nontrivial HW surfaces in $\mathbb{R}^4$ introduced by Luo and Wang in \cite{LW}. We will consider this problem in a forthcoming paper.

\section{Basic material and formulas}
In this section we record some basic material of contact geometry. We refer the reader to consult \cite{Gei} and \cite{Bl} for more materials.
\subsection{Contact Manifolds}
\begin{defn}
A contact manifold $M$ is an odd dimensional manifold with a one form $\alpha$ such that $\alpha\wedge(\dif\alpha)^n\neq0$, where $\dim M=2n+1$.
\end{defn}
Assume now that $(M,\alpha)$ is a given contact manifold of dimension $2n+1$. Then $\alpha$ defines a $2n-$dimensional vector bundle over $M$, where the fibre at each point $p\in M$ is given by
$$\xi_p=\ker\alpha_p.$$
Sine $\alpha\wedge (\dif\alpha)^n$ defines a volume form on $M$, we see that
$$\omega:=\dif\alpha$$
is a closed nondegenerate 2-form on $\xi\oplus\xi$ and hence it defines a symplectic product on $\xi$, say $\omega$, such that $(\xi,\omega|_{\xi\oplus\xi})$ becomes a symplectic vector bundle. A consequence of this fact is that there exists an almost complex bundle structure $\tilde{J}:\xi\mapsto\xi$ compatible with $\dif\alpha$, i.e. a bundle endomorphism satisfying:
\\(1) $\tilde{J}^2=-id_\xi$,
\\(2) $\dif\alpha(\tilde{J}X,\tilde{J}Y)=\dif\alpha(X,Y)$ for all $X,Y\in\xi$,
\\(3) $\dif\alpha(X,\tilde{J}X)>0$ for $X\in\xi\setminus {0}$.

Since $M$ is an odd dimensional manifold, $\omega$ must be degenerate on $TM$, and so we obtain a line bundle $\eta$ over $M$ with fibres
\begin{align*}
\eta_p\coloneqq\{V\in TM|\omega(V,W)=0, \quad\forall\, W\in\xi\}.
\end{align*}

\begin{defn}
The Reeb vector field $\textbf{R}$ is the section of $\eta$ such that $\alpha(\textbf{R})=1$.
\end{defn}

Thus $\alpha$ defines a splitting of $TM$ into a line bundle $\eta$ with the canonical section $\textbf{R}$ and a symplectic vector bundle $(\xi,\omega|\xi\oplus\xi)$. We denote the projection along $\eta$ by $\pi$, i.e.
\begin{align*}
\pi:TM\mapsto\xi,\quad V\mapsto\pi(V)\coloneqq V-\alpha(V)\textbf{R}.
\end{align*}
Using this projection we extend the almost complex structure $\tilde{J}$ to a section $J\in\Gamma(T^*M\otimes TM)$ by setting
$$J(V)=\tilde{J}(\pi(V)),$$
for $V\in TM$.

We have special interest in a special class of submanifolds in contact manifolds.
\begin{defn}
Let $(M,\alpha)$ be a contact manifold, a submanifold $\Sigma$ of $(M,\alpha)$ is called an isotropic submanifold if $T_x\Sigma\subseteq\xi$ for all $x\in \Sigma$.
\end{defn}

For algebraic reasons the dimension of an isotropic submanifold of a $2n+1$ dimensional contact manifold can not bigger than $n$.

\begin{defn}
An isotropic submanifold $\Sigma\subseteq(M,\alpha)$ of maximal possible dimension $n$ is called a Legendrian submanifold.
\end{defn}

\subsection{Sasakian manifolds}
Let $(M,\alpha)$ be a contact manifold. A Riemannian metric $g_\alpha$ defined on $M$ is said to be associated, if it satisfies the following three conditions:
\\(1) $g_\alpha(\textbf{R},\textbf{R})=1$,
\\(2) $g_\alpha(V,\textbf{R})=0$, $\forall\, V\in\xi$,
\\(3) $\omega(V,JW)=g_\alpha(V,W)$, $\forall\, V,W\in\xi$.\

We should mention here that on any contact manifold there exists an associated metric on it, because we can construct one in the following way. We introduce a bilinear form $b$ by
$$b(V,W):=\omega(V,JW),$$
then the tensor
$$g:=b+\alpha\otimes\alpha$$
defines an associated metric on $M$.

Sasakian manifolds are the odd dimensional analogue of K\"ahler manifolds.
\begin{defn}
A contact manifold $M$ with an associated metric $g_\alpha$ is called Sasakian, if the cone $CM$ equipped with the following extended metric $\bar{g}$
\begin{eqnarray*}
(CM,\bar{g})=(\mathbb{R}^+\times M,dr^2+r^2g_\alpha)
\end{eqnarray*}
is K\"ahler with respect to the following canonical almost complex structure $J$ on $TCM=\mathbb{R}\oplus\langle\textbf{R}\rangle\oplus\xi:$
$$J(r\partial r)=-\textbf{R},\quad J(\textbf{R})=r\partial r.$$
Furthermore if $g_\alpha$ is Einstein, $M$ is called a Sasakian Einstein manifold.
\end{defn}

We record more several lemmas which are well known in Sasakian geometry. These lemmas will be used in the subsequent sections.
\begin{lem}
Let $(M,\alpha,g_\alpha,J)$ be a Sasakian manifold. Then
\begin{eqnarray}\label{Reeb}
\bar{\nabla}_X\textbf{R}=-JX,
\end{eqnarray}
and
\begin{eqnarray}\label{derivatives}
(\bar{\nabla}_XJ)(Y)=g(X,Y)\textbf{R}-\alpha(Y)X,
\end{eqnarray}
for $X,Y\in TM$, where $\bar{\nabla}$ is the Levi-Civita connection on $(M,g_\alpha)$.
\end{lem}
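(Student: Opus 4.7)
The plan is to deduce both identities from the defining Kähler property of the Riemannian cone $(CM,\bar{g})=(\mathbb{R}^+\times M,\,dr^2+r^2 g_\alpha)$, namely $\tilde{\nabla}\hat{J}=0$, where I temporarily write $\hat{J}$ for the almost complex structure on $TCM$ (with $\hat{J}\mathbf{R}=r\partial_r$, $\hat{J}(r\partial_r)=-\mathbf{R}$, and $\hat{J}|_\xi=\tilde{J}$) to distinguish it from the tensor $J=\tilde{J}\circ\pi$ on $TM$; the two disagree precisely on the Reeb line, since $J\mathbf{R}=0$ but $\hat{J}\mathbf{R}=r\partial_r$. The Levi-Civita connection $\tilde{\nabla}$ of the cone metric is standard (warped product with warping function $r$): for $X,Y\in TM$ (lifted horizontally),
\begin{align*}
\tilde{\nabla}_{\partial_r}\partial_r=0,\qquad
\tilde{\nabla}_{\partial_r}X=\tilde{\nabla}_X\partial_r=\tfrac{1}{r}X,\qquad
\tilde{\nabla}_X Y=\bar{\nabla}_X Y-r\,g_\alpha(X,Y)\,\partial_r,
\end{align*}
so in particular $\tilde{\nabla}_X(r\partial_r)=X$ and $\hat{J}(\partial_r)=-\tfrac{1}{r}\mathbf{R}$.

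For \eqref{Reeb} I would apply $\tilde{\nabla}_X\hat{J}=0$ with $\mathbf{R}=-\hat{J}(r\partial_r)$ to get
\begin{equation*}
\tilde{\nabla}_X\mathbf{R}=-\hat{J}\bigl(\tilde{\nabla}_X(r\partial_r)\bigr)=-\hat{J}X.
\end{equation*}
Decomposing $X=\alpha(X)\mathbf{R}+\pi X$ so that $\hat{J}X=\alpha(X)\,r\partial_r+JX$, and using the warped product formula $\tilde{\nabla}_X\mathbf{R}=\bar{\nabla}_X\mathbf{R}-r\alpha(X)\partial_r$, the $\partial_r$ components cancel and I read off $\bar{\nabla}_X\mathbf{R}=-JX$.

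For \eqref{derivatives} I would again use $\tilde{\nabla}\hat{J}=0$, now on $Y\in TM$, writing $\hat{J}Y=\alpha(Y)r\partial_r+JY$. Expanding
\begin{align*}
\tilde{\nabla}_X(\hat{J}Y)
&=X(\alpha(Y))\,r\partial_r+\alpha(Y)\,X+\bar{\nabla}_X(JY)-r\,g_\alpha(X,JY)\,\partial_r,\\
\hat{J}(\tilde{\nabla}_X Y)
&=\alpha(\bar{\nabla}_X Y)\,r\partial_r+J(\bar{\nabla}_X Y)+g_\alpha(X,Y)\,\mathbf{R},
\end{align*}
and matching tangent-to-$M$ components yields $\alpha(Y)X+\bar{\nabla}_X(JY)=J(\bar{\nabla}_X Y)+g_\alpha(X,Y)\mathbf{R}$, i.e.\ $(\bar{\nabla}_X J)(Y)=g_\alpha(X,Y)\mathbf{R}-\alpha(Y)X$. (The $\partial_r$ component just reproduces \eqref{Reeb} together with the skew-symmetry of $\tilde{J}$, giving a consistency check.)

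The only real obstacle is conceptual rather than computational: one must scrupulously distinguish $\hat{J}$ on $TCM$ from $J$ on $TM$, because their disagreement on the Reeb direction is exactly what produces the inhomogeneous terms $g_\alpha(X,Y)\mathbf{R}$ and $\alpha(Y)X$ in \eqref{derivatives}. Once this bookkeeping is in place, the identities fall out of a direct component-matching argument from $\tilde{\nabla}\hat{J}=0$ and the warped-product connection formulas.
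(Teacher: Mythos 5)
Your proof is correct. The paper itself offers no argument for this lemma --- it merely records it as ``well known in Sasakian geometry'' --- so there is no in-paper proof to compare against; your derivation is the natural one given that the paper \emph{defines} a Sasakian manifold precisely by the K\"ahler condition on the cone $(CM,\bar g)$. Your warped-product connection formulas are the standard ones, the computation $\tilde{\nabla}_X(r\partial_r)=X$ is right, and you correctly isolate the one subtlety: $\hat J$ on $TCM$ and $J=\tilde J\circ\pi$ on $TM$ differ exactly on the Reeb line ($J\mathbf{R}=0$ versus $\hat J\mathbf{R}=r\partial_r$), and that discrepancy, together with the $\partial_r$-component of $\tilde\nabla_XY$, is what generates the terms $g_\alpha(X,Y)\mathbf{R}$ and $-\alpha(Y)X$ in \eqref{derivatives}. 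The $\partial_r$-component of the second computation does indeed reduce to \eqref{Reeb} combined with the skew-symmetry of $J$ with respect to the associated metric, so your consistency check goes through as claimed.
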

\begin{lem}\label{mean curvatue form}
Let $\Sigma$ be a Legendrian submanifold in a Sasakian Einstein manifold $(M,\alpha,g_\alpha,J)$, then the mean curvature form $\omega(H,\cdot)|_\Sigma$ defines a closed one form on $\Sigma$.
\end{lem}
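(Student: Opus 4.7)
The plan is to reduce the closedness of $\omega(H,\cdot)|_\Sigma$ to a partial trace of the ambient curvature tensor over $T\Sigma$, and then to exploit the Sasakian--Einstein hypothesis to eliminate this trace.

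The first step is a structural reduction. Since $\Sigma$ is Legendrian, $\alpha|_\Sigma=0$, so for any $X\in T\Sigma$ the identity \eqref{Reeb} gives $g(B(X,X),\mathbf R)=-g(X,\bar\nabla_X\mathbf R)=g(X,JX)=0$; that is, $H$ is orthogonal to the Reeb field. Hence $H=JK$ for a unique $K\in\Gamma(T\Sigma)$, and the compatibility $\omega(V,JW)=g(V,W)$ on $\xi$ then yields $\omega(H,\cdot)|_\Sigma=-K^\flat$. Thus the lemma reduces to $dK^\flat=0$. To prepare for that, I would pick a local orthonormal frame $\{e_i\}$ of $\Sigma$, complete it by the normal frame $\{Je_i,\mathbf R\}$, and set $h_{ij}^l:=g(B(e_i,e_j),Je_l)$. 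Differentiating $g(e_j,Je_l)=0$ along $\Sigma$ and applying \eqref{derivatives} with $\alpha(e_l)=0$ shows that $h_{ij}^l$ is fully symmetric in $(i,j,l)$, and in particular $K^l=\sum_i h_{ii}^l$.

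Next I would compute $dK^\flat$ in this frame. Using the full symmetry of $h$ and the skew-symmetry of the Christoffel symbols in an orthonormal frame, a short calculation gives $g(\nabla_{e_i}K,e_j)=\sum_k h_{jk;i}^k$; substituting this into $dK^\flat(e_i,e_j)=g(\nabla_{e_i}K,e_j)-g(\nabla_{e_j}K,e_i)$ and invoking the Codazzi equation
\begin{equation*}
h_{jk;i}^l-h_{ik;j}^l=g(\bar R(e_i,e_j)e_k,Je_l)
\end{equation*}
with $l=k$, summed over $k$, then produces
\begin{equation*}
dK^\flat(e_i,e_j)=\sum_{k=1}^n g(\bar R(e_i,e_j)e_k,Je_k).
\end{equation*}
Thus it remains to show that this partial trace vanishes when $M$ is Sasakian Einstein.

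For the final step I would extend $\{e_k\}_{k=1}^n$ to an orthonormal frame $\{e_k,Je_k,\mathbf R\}$ of $TM$; the antisymmetry of $\bar R$ in its last two slots converts the partial trace into $\tfrac{1}{2}\sum_{a=1}^{2n}g(\bar R(e_i,e_j)v_a,Jv_a)$, a full trace over $\xi$. Applying the pair symmetry $g(\bar R(X,Y)Z,W)=g(\bar R(Z,W)X,Y)$ and the first Bianchi identity rewrites this as a combination of $\bar{\mathrm{Ric}}(e_i,Je_j)$ and a Reeb correction, the latter controlled by the identity $\bar R(X,Y)\mathbf R=\alpha(Y)X-\alpha(X)Y$, which follows directly from \eqref{Reeb} and \eqref{derivatives}. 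The Einstein condition $\bar{\mathrm{Ric}}=2n\,g_\alpha$ together with $g(e_i,Je_j)=\omega(e_i,e_j)=0$ (Legendrian) then kills everything. The main obstacle is precisely this last bookkeeping step: unlike the K\"ahler case, where the partial trace literally is the Ricci form and Einsteinness terminates the argument immediately, the Sasakian setting forces one to disentangle the Reeb-direction contributions carefully. A cleaner conceptual alternative, which I would keep in reserve, is to pass to the K\"ahler cone $(CM,dr^2+r^2g_\alpha)$: since $M$ is Sasakian Einstein the cone is Ricci-flat K\"ahler and $C\Sigma$ is a Lagrangian submanifold, so $\omega_c(H_{C\Sigma},\cdot)$ is closed by Dazord's formula, and restricting to the $r=1$ slice recovers $\omega(H,\cdot)|_\Sigma$.
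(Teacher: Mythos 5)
Your argument is correct, but note that the paper does not actually prove this lemma: it simply cites \cite{Le}*{Proposition A.2} and \cite{Sm}*{Lemma 2.8}, so there is no in-paper proof to compare against. What you have written is essentially a faithful reconstruction of the arguments in those references: the direct route (full symmetry of $\sigma_{ijk}$, Codazzi with the ambient curvature term, then killing the partial trace $\sum_k g(\bar R(e_i,e_j)e_k,Je_k)$) is Smoczyk's computation, while the cone alternative you keep in reserve is the mechanism behind L\^e's proof. The one step you leave as a sketch --- the ``bookkeeping'' that the partial trace vanishes --- does go through: from \eqref{derivatives} one gets $(\bar\nabla^2_{X,Y}J)Z=g(JX,Z)Y-g(Y,Z)JX$, hence
\begin{equation*}
\bar R(X,Y)JZ-J\bar R(X,Y)Z=g(JX,Z)Y-g(Y,Z)JX-g(JY,Z)X+g(X,Z)JY,
\end{equation*}
and combining this with the first Bianchi identity, the identity $\bar R(X,\mathbf{R})\mathbf{R}=X-\alpha(X)\mathbf{R}$ and $\overline{\mathrm{Ric}}=2n\,g_\alpha$ yields $\sum_{k=1}^n g(\bar R(e_i,e_j)e_k,Je_k)=g(e_i,Je_j)$, which vanishes precisely because $\Sigma$ is isotropic. (As a sanity check, for $\mathbb{S}^{2n+1}$ the constant-curvature tensor gives the same value $2g(X,JY)$ for the full trace over $\xi$.) Two small points worth making explicit if you write this up: the Einstein constant is forced to be $2n$ since $\overline{\mathrm{Ric}}(\mathbf{R},\mathbf{R})=2n$ on any Sasakian manifold, so the ``weakly Einstein'' hypothesis mentioned in the paper suffices; and your identification of $h^l_{jk;i}$ with the components of $\nabla^{\perp}B$ uses \autoref{commute of J} to see that $\bar\nabla_{e_i}(Je_l)$ differs from $J\nabla_{e_i}e_l$ only by terms orthogonal to $B(e_j,e_k)$.
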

For a proof of this lemma we refer to \cite{Le}*{Proposition A.2},  and \cite{Sm}*{lemma 2.8}. In fact they proved this result under the weaker  assumption that $(M,\alpha,g_\alpha,J)$ is a weakly Sasakian Einstein manifold, where weakly Einstein means that $g_\alpha$ is Einstein only when restricted to the contact hyperplane.
\begin{lem}
Let $\Sigma$ be a Legendrian submanifold in a Sasakian manifold $(M,\alpha,g_\alpha,J)$ and $A$ be the second fundamental form of $\Sigma$ in $M$. Then we have
\begin{eqnarray*}
g_\alpha(A(X,Y),\textbf{R})=0.
\end{eqnarray*}
\end{lem}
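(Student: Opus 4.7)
The plan is first to observe that the Reeb vector field $\textbf{R}$ is normal to a Legendrian submanifold. Indeed, $T\Sigma\subseteq\xi=\ker\alpha$ by the isotropic condition, and property~(2) of the associated metric gives $g_\alpha(V,\textbf{R})=0$ for every $V\in\xi$; thus $\textbf{R}$ is a well-defined normal field along $\Sigma$. Since the tangential part of $\bar{\nabla}_X Y$ is $g_\alpha$-orthogonal to $\textbf{R}$, for $X,Y\in T\Sigma$ we have $g_\alpha(A(X,Y),\textbf{R})=g_\alpha(\bar{\nabla}_X Y,\textbf{R})$.

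Next I would differentiate the identity $g_\alpha(Y,\textbf{R})=0$ along $\Sigma$ in the direction $X$. The Leibniz rule yields
\[g_\alpha(\bar{\nabla}_X Y,\textbf{R})=-g_\alpha(Y,\bar{\nabla}_X\textbf{R}),\]
and the Sasakian formula \eqref{Reeb}, namely $\bar{\nabla}_X\textbf{R}=-JX$, reduces the problem to showing that $g_\alpha(Y,JX)=0$ for all $X,Y\in T\Sigma$. Since $X\in\xi$, we have $JX=\tilde J X\in\xi$ with $J(JX)=-X$, so the associated-metric identity $\omega(V,JW)=g_\alpha(V,W)$ applied with $W=JX$ gives
\[g_\alpha(Y,JX)=\omega(Y,J(JX))=-\omega(Y,X)=d\alpha(X,Y),\]
which vanishes on $T\Sigma$ because $\alpha|_\Sigma\equiv 0$ forces $d\alpha|_\Sigma\equiv 0$.

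No step presents any genuine obstacle; the argument is a three-line manipulation of the structural equations. The conceptual point, and where the Legendrian hypothesis enters twice, is that $T\Sigma\subseteq\xi$ makes $\textbf{R}$ normal (so the statement is meaningful in the first place) and simultaneously forces $d\alpha$ to vanish on $\Sigma$ (so the last displayed term is zero). An equally short alternative for the final step would be to observe that $(X,Y)\mapsto g_\alpha(Y,JX)$ on $\xi$ is antisymmetric, because $J$-invariance of $\omega$ makes $J$ skew with respect to $g_\alpha$, whereas $g_\alpha(A(X,Y),\textbf{R})$ is symmetric in $X,Y$; the only such tensor is zero.
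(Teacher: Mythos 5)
Your argument is correct and coincides with the paper's own proof: both reduce $g_\alpha(A(X,Y),\textbf{R})$ to $g_\alpha(\bar{\nabla}_XY,\textbf{R})=-g_\alpha(Y,\bar{\nabla}_X\textbf{R})=g_\alpha(Y,JX)$ via the Reeb formula \eqref{Reeb}. You merely supply the (correct) justification that $g_\alpha(Y,JX)=d\alpha(X,Y)=0$ on an isotropic submanifold, a step the paper leaves implicit.
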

\begin{proof} For any $X,Y\in T\Sigma$,
\begin{eqnarray*}
\hin{A(X,Y)}{\textbf{R}}&=&\hin{\bar{\nabla}_XY}{\textbf{R}}
\\&=&-\hin{ Y}{\bar{\nabla}_X\textbf{R}}
\\&=&\hin{ Y}{JX}
\\&=&0,
\end{eqnarray*}
where in the third equality we used \eqref{Reeb}.
\end{proof}

In particular this lemma implies that the mean curvature $H$ of $\Sigma$ is orthogonal to the Reeb field $\textbf{R}$. This fact is important in our following argument.

\begin{lem}\label{commute of J}
For any $Y,Z\in \ker\bar\alpha$, we have
\begin{align*}
\bar g_{\bar\alpha}(\bar{\nabla}_X(\bar JY),Z)=\bar g_{\bar\alpha}(\bar J\bar{\nabla}_XY,Z).
\end{align*}
\end{lem}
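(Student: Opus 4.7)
The plan is to recognize that the identity
\[
\bar\nabla_X(JY)-J\bar\nabla_X Y=(\bar\nabla_X J)(Y)
\]
reduces the claim to showing that the "error term" $(\bar\nabla_X J)(Y)$ is orthogonal (in the associated metric) to every $Z\in\ker\alpha$, whenever $Y\in\ker\alpha$. Everything we need to control the covariant derivative of $J$ is already packaged in formula \eqref{derivatives}, so the lemma should drop out with essentially no computation.

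First I would write, for $Y\in\ker\alpha$, the explicit expression from \eqref{derivatives}:
\[
(\bar\nabla_X J)(Y)=g_\alpha(X,Y)\,\mathbf{R}-\alpha(Y)X=g_\alpha(X,Y)\,\mathbf{R},
\]
where the second term dropped out because $\alpha(Y)=0$. So the failure of $\bar\nabla$ and $J$ to commute on horizontal vectors is purely in the Reeb direction.

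Next I would pair this with $Z\in\ker\alpha$ using the associated metric:
\[
g_\alpha\bigl((\bar\nabla_X J)(Y),Z\bigr)=g_\alpha(X,Y)\,g_\alpha(\mathbf{R},Z)=0,
\]
the last equality being condition (2) in the definition of an associated metric (the Reeb field is $g_\alpha$-orthogonal to $\xi=\ker\alpha$). Rearranging gives the desired equality.

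There is no real obstacle here; the only thing to be careful about is making sure the hypothesis $Y\in\ker\alpha$ is used to kill the $\alpha(Y)X$ term and the hypothesis $Z\in\ker\alpha$ is used to kill the pairing with $\mathbf{R}$. The Sasakian structure equation \eqref{derivatives} does all of the geometric work, and the statement is really a structural observation that $J$ fails to be parallel only along the Reeb direction.
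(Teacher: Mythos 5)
Your proof is correct: the paper records this lemma without proof (as one of several ``well known'' facts in Sasakian geometry), and your argument --- writing $\bar{\nabla}_X(JY)-J\bar{\nabla}_XY=(\bar{\nabla}_XJ)(Y)$, using \eqref{derivatives} together with $\alpha(Y)=0$ to reduce the error term to $g_\alpha(X,Y)\mathbf{R}$, and then killing the pairing with $Z$ via the orthogonality of $\mathbf{R}$ to $\ker\alpha$ --- is exactly the standard derivation the authors are implicitly relying on. You correctly use both hypotheses ($Y\in\ker\alpha$ to drop $\alpha(Y)X$ and $Z\in\ker\alpha$ to drop the Reeb component), so there is nothing missing.
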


A canonical example of Sasakian Einstein manifolds is the standard odd dimensional sphere $\mathbb{S}^{2n+1}$.

\begin{eg}[The standard sphere $\mathbb{S}^{2n+1}$]
 Let $\mathbb{C}^n=\mathbb{R}^{2n+2}$ be the Euclidean space with coordinates $(x_1,\dotsc,x_{n+1},y_1,\dotsc,y_{n+1})$ and $\mathbb{S}^{2n+1}$ be the standard unit sphere in $\mathbb{R}^{2n+2}$. Define
$$\alpha_0=\sum_{j+1}^{n+1}\left(x_jdy_j-y_jdx_j\right),$$
then
$$\alpha:=\alpha_0|_{\mathbb{S}^{2n+1}}$$
defines a contact one form on $\mathbb{S}^{2n+1}$. Assume that $g_0$ is the standard metric on $\mathbb{R}^{2n+2}$ and $J_0$ is the standard complex structure of $\mathbb{C}^n$. We define
$g_\alpha=g_0|_{\mathbb{S}^{2n+1}},$
then $(\mathbb{S}^{2n+1},\alpha,g_\alpha)$ is a Sasakian Einstein manifold. The contact hyperplane is characterized by
$$\ker\alpha_x=\{Y\in T_x\mathbb{S}^{2n+1}|\langle Y,J_0x\rangle=0\}.$$
\end{eg}

\subsection{Legendrian submanifolds in the unit sphere}
Assume $\phi:\Sigma^n\mapsto\mathbb{S}^{2n+1}\subseteq \mathbb{C}^{n+1} $ is a Legendrian immersion. Let $B$ be the second fundamental form, $A^\nu$ be the shape operator with respect to the norm vector $\nu\in T^{\bot}\Sigma$ and $H$ be the mean curvature vector. The shape operator $A^{\nu}$ is a symmetric operator on the tangent bundle and satisfies the following  Weingarten equations
\begin{align*}
\hin{B(X,Y)}{\nu}=\hin{ A^{\nu}(X)}{Y},\quad\forall X, Y\in T\Sigma, \nu\in T^{\bot}\Sigma.
\end{align*}
The Gauss equations, Codazzi equations and Ricci equations are given by
\begin{align}
R(X,Y,Z,W)=&\langle X,Z\rangle \langle Y,W\rangle-\langle X,W\rangle \langle Y,Z\rangle+\hin{ B(X,Z)}{B(Y,W)}-\hin{ B(X,W)}{B(Y,Z)},\label{gauss equ.}\\
\left(\nabla_{X}^{\bot}B\right)(Y,Z)=&\left(\nabla_{Y}^{\bot}B\right)(X,Z),\label{codazzi equ.} \\
R^{\bot}(X,Y,\mu,\nu)=&\hin{ A^{\mu}(X)}{A^{\nu}(Y)}-\hin{ A^{\mu}(Y)}{A^{\nu}(X)},\nonumber
\end{align}
where $X, Y, Z, W\in T\Sigma, \mu, \nu\in T^{\bot}\Sigma$.

Let $\set{e_1,e_2}$ be a local orthonormal frame of $\Sigma$.  Then $\set{Je_1,Je_2, J\phi}$ is a local orthonormal frame of the normal bundle $T^{\perp}\Sigma$, where $J$ is the complex structure of $\mathbb{C}^{n+1}$. Set 
\begin{align*}
\sigma_{ijk}\coloneqq\hin{B(e_i,e_j)}{Je_k},\quad \mu_j\coloneqq\hin{H}{Je_j}=\sum_{i=1}^n\sigma_{iij},
\end{align*}
then by \autoref{mean curvatue form}, \autoref{commute of J} and  the Codazzi equation \eqref{codazzi equ.} we have
\begin{align}\label{tri-symm.}
\sigma_{ijk}=\sigma_{jik}=\sigma_{ikj},\quad \sigma_{ijk,l}=\sigma_{ijl,k},\\
\dif\mu=0,\quad\delta\mu=\Div\left(JH\right).\notag
\end{align}

 Recall that
 \begin{defn}\label{CSL}
 $\Sigma$ is a csL submanifold if it is a critical point of the volume functional among Legendrian submanifolds.
 \end{defn}
 CsL submanifolds satisfy the following Euler-Lagrangian equation (\cites{CLU,Ir}):
\begin{align*}
\Div(JH)=0.
\end{align*}
It is obvious that $\Sigma$ is csL when $\Sigma$ is minimal. The following observation  is very important for the study of csL submanifolds.
\begin{lem}
 $\Sigma$ is csL iff $\mu$ is a harmonic $1$-form iff  $JH$ is a harmonic vector field.
\end{lem}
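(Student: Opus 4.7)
The plan is to assemble three equivalences out of ingredients already recorded in this section, so the argument is mostly bookkeeping.

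First, I would verify that the $1$-form $\mu$, whose components are $\mu_j = \hin{H}{Je_j}$, is (up to sign) the metric dual of $JH$. The earlier lemma gives $\hin{A(X,Y)}{\textbf{R}} = 0$, so $H \perp \textbf{R} = J\phi$, and in the normal frame $\{Je_1, Je_2, J\phi\}$ one has $H = \sum_j \mu_j Je_j$. Applying $J$ and using $J^2 = -\mathrm{id}$ on $\xi$ gives $JH = -\sum_j \mu_j e_j \in T\Sigma$, so its metric dual is exactly $-\mu$. This already yields the equivalence between $\mu$ being a harmonic $1$-form and $JH$ being a harmonic vector field, because the musical isomorphism commutes with the Hodge Laplacian.

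Next I would combine the two identities in the display preceding \autoref{CSL}: one has $\dif\mu = 0$, which follows from \autoref{mean curvatue form} via the identity $\omega(H,X)|_\Sigma = -g_\alpha(H,JX) = -\mu(X)$ for $X \in T\Sigma$; and one has $\delta\mu = \Div(JH)$, already recorded in the same display. Together they give $\mu$ harmonic $\iff \delta\mu = 0 \iff \Div(JH) = 0$. Finally, by the csL Euler-Lagrange equation stated immediately above the lemma, $\Sigma$ is csL $\iff \Div(JH) = 0$. Chaining the three equivalences closes the loop.

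There is no substantive obstacle: the only care needed is the sign-tracking in the duality between $\mu$ and $JH$ and the verification that $JH$ is genuinely tangent to $\Sigma$, both of which are immediate from the material on the page.
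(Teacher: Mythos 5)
Your proof is correct and is exactly the argument the paper intends: the lemma is stated without proof precisely because it follows from the displayed identities $\dif\mu=0$ and $\delta\mu=\Div(JH)$, the csL Euler--Lagrange equation $\Div(JH)=0$, and the metric duality (up to sign) between $\mu$ and $JH$. Your sign bookkeeping is right and in any case harmless for the harmonicity equivalences.
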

By using the Bochner formula for harmonic vector fields (cf. \cite{Jo}), we get
\begin{lem}\label{lem:bochner}
If $\Sigma$ is csL, then
 \begin{align*}
\dfrac12\Delta\abs{H}^2=&|\nabla(JH)|^2+Ric(JH,JH).
\end{align*}
\end{lem}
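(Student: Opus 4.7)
The plan is to reduce the identity to the classical Bochner--Weitzenb\"ock formula for a harmonic $1$-form on the Riemannian surface $\Sigma$, applied to the dual of $JH$. By the previous lemma, the csL condition is equivalent to $JH$ being a harmonic vector field on $\Sigma$ (equivalently, $\mu$ being a harmonic $1$-form), which is precisely the hypothesis needed to invoke Bochner.

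The first step is to check that $JH$ really is a tangent vector field on $\Sigma$ whose pointwise norm equals $|H|$. Writing $H=\sum_{j}\mu_{j}Je_{j}$ in a local orthonormal frame $\{e_{1},e_{2}\}$ of $\Sigma$ and using the lemma stating $g_{\alpha}(A(X,Y),\mathbf{R})=0$, so that $H\perp\mathbf{R}$ lies in the contact distribution $\xi$, one applies $J^{2}=-\mathrm{id}_{\xi}$ to obtain
\[
JH=-\sum_{j}\mu_{j}e_{j}\in T\Sigma,\qquad |JH|^{2}=\sum_{j}\mu_{j}^{2}=|H|^{2}.
\]
Moreover, the metric dual of $JH$ is (up to a sign) exactly the $1$-form $\mu$ introduced earlier, so the csL condition $\delta\mu=\mathrm{div}(JH)=0$ together with the closedness $\dif\mu=0$ from \autoref{mean curvatue form} makes $\mu$ a harmonic $1$-form on $\Sigma$.

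The second step is to apply the Bochner formula for harmonic $1$-forms on the Riemannian surface $\Sigma$ (see \cite{Jo}): for any harmonic $1$-form $\omega$ on $\Sigma$,
\[
\tfrac{1}{2}\Delta|\omega|^{2}=|\nabla\omega|^{2}+\mathrm{Ric}(\omega^{\sharp},\omega^{\sharp}),
\]
where $\mathrm{Ric}$ denotes the intrinsic Ricci curvature of $\Sigma$. Substituting $\omega=\mu$ and $\omega^{\sharp}=-JH$, and using $|\omega|=|H|$ together with the fact that metric duality is parallel (so $|\nabla\omega|=|\nabla(JH)|$), one obtains the desired identity
\[
\tfrac{1}{2}\Delta|H|^{2}=|\nabla(JH)|^{2}+\mathrm{Ric}(JH,JH).
\]

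There is essentially no obstacle here; the content of the lemma is the translation dictionary $\mu\leftrightarrow JH$ and the norm identity $|JH|=|H|$, after which Bochner is applied verbatim. The only small point to be careful about is ensuring that one is using the intrinsic Ricci curvature of $\Sigma$ (which is what appears in the standard Bochner identity), and not confusing it with any ambient curvature of $\mathbb{S}^{5}$ at this stage.
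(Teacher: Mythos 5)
Your proposal is correct and follows the same route as the paper: the paper's entire proof is to invoke the Bochner formula for harmonic vector fields (citing Jost) after the preceding lemma identifies the csL condition with harmonicity of $JH$ (equivalently of $\mu$). You simply spell out the dictionary $\mu\leftrightarrow JH$, the norm identity $\abs{JH}=\abs{H}$, and the intrinsic-Ricci caveat, all of which the paper leaves implicit.
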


From \eqref{lem:bochner} it is easy to see that we have
\begin{lem}\label{lem:Gauss}
If $\Sigma\subset\mathbb{S}^5$ is csL and non-minimal, then the zero set of $H$ is isolate and
\begin{align*}
\Delta\log\abs{H}=\kappa
\end{align*}
provided $H\neq0$, where $\kappa$ is the Gauss curvature of $\Sigma$.
\end{lem}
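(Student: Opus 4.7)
The plan is to identify $JH$ with a harmonic $1$-form on $\Sigma$ and then exploit the fact that harmonic $1$-forms on a Riemann surface are locally real parts of holomorphic differentials. By the lemma immediately preceding \autoref{lem:bochner}, the csL condition yields that the $1$-form $\mu$ dual to $JH$ is harmonic; since $\Sigma$ is non-minimal, $\mu\not\equiv 0$, and $|H|=|JH|=|\mu|$ because $J$ is an isometry on the contact hyperplane.

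I would then pass to a local isothermal coordinate $z=x+iy$ on $\Sigma$ with $g=e^{2u}|dz|^2$ and use the classical fact that any real harmonic $1$-form can be written as $\mu=\operatorname{Re}(f\,dz)$ for some holomorphic $f$ on the chart, with $|\mu|_g^2=e^{-2u}|f|^2$. Because $\mu\not\equiv 0$ and $\Sigma$ is connected, unique continuation for holomorphic functions forces $f$ to have only isolated zeros in every chart, and therefore the zero set of $H$ (which coincides with the zero set of $f$) is isolated.

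On the open set $\{H\neq 0\}$, one has $\log|H|^2=\log|f|^2-2u$. The function $\log|f|^2=2\operatorname{Re}\log f$ is Euclidean-harmonic where $f\neq 0$, hence annihilated by the Laplace--Beltrami operator $\Delta=e^{-2u}\Delta_0$. Combining this with the standard identity $\kappa=-\Delta u$ for the Gauss curvature of a conformal metric yields
\begin{align*}
\Delta\log|H|^2=-2\Delta u=2\kappa,
\end{align*}
which gives $\Delta\log|H|=\kappa$.

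An alternative, purely intrinsic route is to apply \autoref{lem:bochner} directly, using $\operatorname{Ric}=\kappa\,g$ on the surface together with the refined Kato equality $|\nabla(JH)|^2=2|\nabla|H||^2$ that holds for harmonic $1$-forms on a $2$-manifold (provable by an adapted orthonormal frame calculation from $d\mu=\delta\mu=0$); then a direct expansion of $\Delta\log|H|^2=\Delta|H|^2/|H|^2-|\nabla|H|^2|^2/|H|^4$ collapses to $2\kappa$. The main technical subtlety in either approach is extracting the \emph{holomorphic} structure of $\mu$ that is peculiar to surfaces; once this is in hand, both the isolated-zero statement and the pointwise identity $\Delta\log|H|=\kappa$ reduce to elementary manipulations.
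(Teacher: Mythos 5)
Your argument is correct. The paper offers no written proof of \autoref{lem:Gauss}: it merely asserts that the statement follows ``easily'' from the Bochner identity of \autoref{lem:bochner}, and that hinted route is essentially your ``alternative'' at the end --- on a surface $\operatorname{Ric}=\kappa g$, so $\tfrac12\Delta\abs{H}^2=\abs{\nabla(JH)}^2+\kappa\abs{H}^2$, and the refined Kato \emph{equality} $\abs{\nabla(JH)}^2=2\abs{\nabla\abs{H}}^2$ for harmonic $1$-forms in dimension two collapses $\Delta\log\abs{H}$ to $\kappa$. Your primary route via the holomorphic representation $\mu=\operatorname{Re}(f\,dz)$ is genuinely different and in fact does more: the Bochner computation by itself says nothing about the zero set of $H$, whereas the identity theorem applied to the globally defined holomorphic $1$-form $\mu+i\ast\mu$ immediately gives that the zeros are isolated on a connected non-minimal $\Sigma$, and the conformal computation $\log\abs{H}^2=\log\abs{f}^2-2u$ together with $\kappa=-\Delta u$ yields the Liouville-type identity in two lines. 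The ingredients you rely on are all available in the paper: $d\mu=0$ comes from \autoref{mean curvatue form}, $\delta\mu=0$ is exactly the csL condition (so $\mu$ is harmonic by the lemma preceding \autoref{lem:bochner}), and $\abs{H}=\abs{\mu}$ because $H$ is orthogonal to $\textbf{R}$ and $J$ is an isometry on the contact hyperplane. In short, your proof is complete and arguably supplies the justification (especially for the isolated-zero claim) that the paper's one-line assertion leaves implicit.
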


\section{Willmore Legendrian surfaces in \texorpdfstring{$\mathbb{S}^5$}{the unit sphere}}
In this section we prove that every complete Willmore Legendrian surface in $\mathbb{S}^5$ must be a minimal surface. Firstly, we rewrite the Willmore operator acting on Legendrian surfaces, i.e., we prove the following 
\begin{prop}\label{pro0}
Assume that $\Sigma$ is a Legendrian surface in $\mathbb{S}^5$, then its Willmore operator can be written as
\begin{eqnarray*}
\vec{W}(\Sigma)=\dfrac12\set{-J\nabla\Div(JH)+B(JH,JH)-\dfrac12\abs{H}^2H-2\Div(JH)\textbf{R}}.
\end{eqnarray*}
In particular, the Euler-Lagrangian equation of Willmore Legendrian surfaces in $\mathbb{S}^5$ is
\begin{eqnarray}\label{equ4}
-J\nabla\Div(JH)+B(JH,JH)-\dfrac12\abs{H}^2H-2\Div(JH)\textbf{R}=0.
\end{eqnarray}
\end{prop}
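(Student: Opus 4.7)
My plan is to expand the Willmore operator in the adapted normal frame $\set{Je_1,Je_2,\textbf{R}}$ and then reorganize using the Sasakian identities. I will choose an orthonormal tangent frame $\set{e_1,e_2}$ which is normal at the point of interest; by the preceding lemma, $B(e_i,e_j)=\sigma_{ijk}Je_k$ has no $\textbf{R}$-component, $H=\mu_iJe_i$, and the tangent vector $JH=-\mu_ie_i$ encodes the mean-curvature data.

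The main step will be to compute $\Delta^\perp H$. The Sasakian identities \eqref{Reeb} and \eqref{derivatives} give
\begin{equation*}
\nabla^\perp_X(Je_i)=J\nabla_Xe_i+g(X,e_i)\textbf{R},\qquad \nabla^\perp_X\textbf{R}=-JX.
\end{equation*}
Applying $\nabla^\perp$ twice to $H$ and tracing produces a scalar-Laplacian piece $(\Delta\mu_i)Je_i$, a Sasakian cross term yielding $-2\Div(JH)\textbf{R}$, a $-H$ contribution from $\nabla^\perp\textbf{R}$, and a curvature correction of the form $\mu_iJ\sum_k\nabla_{e_k}\nabla_{e_k}e_i$. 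I would then use the Bochner--Weitzenb\"ock formula applied to the tangent vector $-JH$, combined with $\dif\mu=0$ from \autoref{mean curvatue form} (which forces the Hodge Laplacian of $JH$ to equal $-\nabla\Div(JH)$) and $Ric(JH)=\kappa JH$ on the $2$-surface, to collapse the expression to
\begin{equation*}
\Delta^\perp H=-2\Div(JH)\textbf{R}-J\nabla\Div(JH)+(\kappa-1)H,
\end{equation*}
where $\kappa$ is the Gauss curvature of $\Sigma$.

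The last ingredient is the purely algebraic identity
\begin{equation*}
\sum_{i,j}\hin{B(e_i,e_j)}{H}B(e_i,e_j)=B(JH,JH)+(1-\kappa)H,
\end{equation*}
which follows from the full symmetry $\sigma_{ijk}=\sigma_{jik}=\sigma_{ikj}$ together with the Gauss equation $\kappa=1+\hin{B(e_1,e_1)}{B(e_2,e_2)}-\abs{B(e_1,e_2)}^2$ for a surface in $\mathbb{S}^5$. Plugging both formulas into the expression for $\overrightarrow{W}(f)$ from the introduction, the $\pm(\kappa-1)H$ pieces cancel and the remaining terms rearrange into the claimed formula. The hard part will be the $\Delta^\perp H$ step: the normal frame $\set{Je_1,Je_2,\textbf{R}}$ is not parallel even at a normal tangent point, so the cross terms from $\bar\nabla J$ and $\bar\nabla\textbf{R}$ must be tracked carefully, and the second-order correction $\mu_iJ\sum_k\nabla_{e_k}\nabla_{e_k}e_i$ must be reduced to $\nabla\Div(JH)$ via Bochner together with the closedness of $\mu$.
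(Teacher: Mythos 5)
Your plan is correct and follows essentially the same route as the paper: compute $\Delta^{\perp}H$ via the Sasakian identities to extract the $-2\Div(JH)\textbf{R}$ and $-H$ terms, convert $J\Delta(JH)$ into $J\nabla\Div(JH)+\kappa J(JH)$ using the closedness of the mean curvature form together with the Weitzenb\"ock/Ricci identity, and handle $\sum_{i,j}\hin{B(e_i,e_j)}{H}B(e_i,e_j)$ by the tri-symmetry of $\sigma$ and the Gauss equation. Your two intermediate formulas, including the identity $\sum_{i,j}\hin{B(e_i,e_j)}{H}B(e_i,e_j)=B(JH,JH)+(1-\kappa)H$ and the resulting cancellation of the $(\kappa-1)H$ terms, agree with the paper's Claim and assemble into \eqref{equ4} exactly as in the paper's proof.
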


\begin{proof}
Let $\{\nu_1,\nu_2,\textbf{R}\}$ be a local orthonormal frame of the normal bundle of $\Sigma$, then the Willmore equation \eqref{ELE} can be rewritten as
\begin{eqnarray*}
\Delta^{\nu}H+\sum_\alpha\langle A^\alpha, A^H\rangle\nu_{\alpha}-\dfrac12\abs{H}^2H=0.
\end{eqnarray*}
Note that by \eqref{derivatives} we have $$\nabla^\nu_X(JY)=J\nabla_XY+g(X,Y)\textbf{R}$$ for $X,Y\in\Gamma\left(T\Sigma\right)$. Choose a local orthonormal frame field around $p$ with $\nabla e_i\vert_p=0$, then
\begin{eqnarray*}
&&J\nabla_{e_i}(JH)
\\&=&\nabla^\nu_{e_i}(J(JH))-g(e_i,JH)\textbf{R}
\\&=&-\nabla^\nu_{e_i}H-g(e_i,JH)\textbf{R}
\end{eqnarray*}
and
\begin{eqnarray*}
J\nabla_{e_i}(\nabla_{e_i}(JH))&=&\nabla^\nu_{e_i}(J\nabla_{e_i}(JH))-g(e_i,\nabla_{e_i}JH)\textbf{R}
\\&=&\nabla^\nu_{e_i}(-\nabla^\nu_{e_i}H-g(e_i,JH)\textbf{R})-g(e_i,\nabla_{e_i}JH)\textbf{R}
\\&=&-\nabla^\nu_{e_i}\nabla^\nu_{e_i}H-2g(e_i,\nabla_{e_i}(JH)\textbf{R}-g(e_i,JH)\left(\bar{\nabla}_{e_i}\textbf{R}\right)^{\nu}
\\&=&-\nabla^\nu_{e_i}\nabla^\nu_{e_i}H-2g(e_i,\nabla_{e_i}(JH)\textbf{R}-g(H,Je_i)Je_i,
\end{eqnarray*}
where in the last equality we also used \eqref{Reeb}. Therefore we obtain
\begin{eqnarray*}
\Delta^{\nu}H=-J\Delta(JH)-H-2\Div(JH)\textbf{R},
\end{eqnarray*}
which implies that $\Sigma$ satisfies the following equation
\begin{eqnarray*}
-J\Delta(JH)+\sum_\alpha\hin{A^{\alpha}}{A^H}\nu_{\alpha}-\dfrac12\left(2+\abs{H}^2\right)H-2\Div (JH)\textbf{R}=0.
\end{eqnarray*}
In addition, by  \autoref{mean curvatue form}, the dual one form of $JH$ is harmonic. By the Ricci identity we have
\begin{eqnarray*}
\Delta(JH)=\nabla\Div(JH)+\kappa JH.
\end{eqnarray*}
The Proposition is then a consequence of the following Claim together with above two identities.
\begin{claim}
\begin{align*}
2\kappa=2+\abs{H}^2-|B|^2,\\
\sum_\alpha\hin{A^\alpha}{A^H}\nu_{\alpha}-\dfrac12|B|^2H=B(JH,JH)-\dfrac12\abs{H}^2H.
\end{align*}
\end{claim}
\proof The first equation is obvious by the Gauss equation \eqref{gauss equ.}. The second equation can be proved by the Gauss equation \eqref{gauss equ.} and the tri-symmetry of the tensor $\sigma$ (see \eqref{tri-symm.}). To be precise, for every tangent vector field $Z\in T\Sigma$ we have
\begin{eqnarray*}
&&\langle B(JH,JH),JZ\rangle-\sum_\alpha\langle A^\alpha,A^H\rangle\langle\nu_{\alpha},JZ\rangle
\\&=&-\langle B(Z,JH),H\rangle-\sum_{i,j}\langle B(e_i,e_j),JZ\rangle\langle B(e_i,e_j),H\rangle
\\&=&\sum_{i,j}\langle B(Z,e_j),Je_i\rangle\langle B(JH,e_j),e_i\rangle-\langle B(Z,JH),H\rangle
\\&=&\sum_j\langle B(Z,e_j),B(JH,e_j)\rangle-\langle B(Z,JH),H\rangle
\\&=&Ric(Z,JH)-\langle Z,JH\rangle
\\&=&(\kappa-1)\langle Z,JH\rangle
\\&=&\dfrac12(\abs{H}^2-\abs{B}^2)\langle Z,JH\rangle
\\&=&\dfrac12(\abs{B}^2-\abs{H}^2)\langle H,JZ\rangle.
\end{eqnarray*}
This completes the proof of the second equation.

\end{proof}

Now we are in position to prove the following
\begin{theorem}\label{main thm1}
Every complete Willmore Legendrian surface in $\mathbb{S}^5$ is a minimal surface.
\end{theorem}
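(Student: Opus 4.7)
The plan is to extract, from the Willmore Legendrian equation in \autoref{pro0}, an algebraic pinching forcing the Gauss curvature $\kappa$ to equal $1$ wherever $H\neq0$, and then to combine this with Bonnet--Myers and the maximum principle applied to $\log\abs{H}$.

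First, I would take the inner product of \eqref{equ4} with the Reeb field $\textbf{R}$. Both $B(JH,JH)$ and $H$ are orthogonal to $\textbf{R}$ by the Reeb-free property of the second fundamental form of Legendrian submanifolds in Sasakian manifolds proved just before the example of $\mathbb{S}^{2n+1}$; and $\nabla\Div(JH)$ is tangent to $\Sigma\subset\xi$, so $J\nabla\Div(JH)\in\xi$ is also perpendicular to $\textbf{R}$. Only the $\textbf{R}$-component therefore survives, forcing $\Div(JH)=0$; in particular $\Sigma$ is csL, recovering Luo's result \cite{Luo}. Equation \eqref{equ4} thereby reduces to the purely algebraic identity
\begin{equation*}
B(JH,JH)=\dfrac12\abs{H}^2H.
\end{equation*}

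Next, I would analyze this identity pointwise at a point $p$ where $H(p)\neq0$ by choosing an orthonormal frame with $e_1=-JH/\abs{H}$, so that $\mu_1=\abs{H}$, $\mu_2=0$, and $JH=-\abs{H}e_1$. Projecting onto $Je_1$ and $Je_2$, and combining with the trace relations $\sigma_{11k}+\sigma_{22k}=\mu_k$ together with the total symmetry from \eqref{tri-symm.}, pins down
\begin{equation*}
\sigma_{111}=\sigma_{122}=\dfrac12\abs{H},\qquad \sigma_{112}=\sigma_{222}=0.
\end{equation*}
Summing squares gives $\abs{B}^2=\abs{H}^2$, so the Gauss identity $2\kappa=2+\abs{H}^2-\abs{B}^2$ (as derived in the Claim inside the proof of \autoref{pro0}) yields $\kappa=1$ at every point where $H\neq0$. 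Since $\Sigma$ is csL, \autoref{lem:Gauss} ensures that, if $\Sigma$ is not minimal, the zero set of $H$ is discrete; continuity then upgrades the identity to $\kappa\equiv1$ on all of $\Sigma$.

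Finally, suppose for contradiction that $\Sigma$ is not minimal. Then $\Sigma$ is a complete surface of constant Gauss curvature $1$, hence compact (with diameter at most $\pi$) by Bonnet--Myers. On this compact surface, $\abs{H}$ attains its positive global maximum at some point $p_0$. Near $p_0$, $H$ is nonvanishing, so $\log\abs{H}$ is smooth with a local maximum at $p_0$, and the maximum principle forces $\Delta\log\abs{H}(p_0)\le0$, contradicting $\Delta\log\abs{H}=\kappa=1$ from \autoref{lem:Gauss}. Thus $H\equiv0$ and $\Sigma$ is minimal. The main obstacle is the middle algebraic step: recognizing that the residual identity $B(JH,JH)=\frac12\abs{H}^2H$, combined with the total symmetry of $\sigma$, forces the pinching $\abs{B}^2=\abs{H}^2$ and hence $\kappa=1$; once that is in hand, the rest is a routine combination of Bonnet--Myers with the Bochner identity from \autoref{lem:Gauss}.
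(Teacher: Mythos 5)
Your proof is correct and follows essentially the same route as the paper's: extract $\Div(JH)=0$ and $B(JH,JH)=\tfrac12\abs{H}^2H$ from \eqref{equ4}, use the Gauss equation to get $\kappa=1$ wherever $H\neq0$, invoke \autoref{lem:Gauss} for discreteness of the zero set and Bonnet--Myers for compactness. The only cosmetic difference is the finish: the paper applies the maximum principle to the Bochner identity of \autoref{lem:bochner}, $\tfrac12\Delta\abs{H}^2=\abs{\nabla(JH)}^2+\abs{H}^2$, while you apply it to $\Delta\log\abs{H}=\kappa=1$ at the maximum of $\abs{H}$; both are valid.
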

\begin{proof}
 We prove by a contradiction argument. Assume that $\Sigma$ is a complete Willmore Legendrian surface in $\mathbb{S}^5$ which is not a minimal surface. If $H\neq0$, then let $\set{e_1=\frac{JH}{\abs{H}},e_2}$ be a local orthonormal frame field of $\Sigma$. From \eqref{equ4} we have
 \begin{align*}
 B(e_1,e_1)=-\frac{1}{2}\abs{H}Je_1,
 \end{align*}
 which also implies that
 \begin{align*}
 B(e_2,e_2)=-\dfrac{1}{2}\abs{H}Je_1,\quad h_{11}^2=0.
 \end{align*}
Then by the Gauss equation \eqref{gauss equ.} we have
\begin{eqnarray*}
 \kappa&=&1+\hin{B(e_1,e_1)}{B(e_2,e_2)}-\abs{B(e_1,e_2)}^2
 \\&=&1+\dfrac14\abs{H}^2-\abs{h_{12}^1}^2-\abs{h_{12}^2}^2
 \\&=&1+\dfrac14\abs{H}^2-\abs{h_{22}^1}^2
 \\&=&1.
 \end{eqnarray*}
 Since $\Sigma$ is a Willmore Legendrian surface, from \eqref{equ4} we see that $\Div(JH)=0$. By \autoref{lem:Gauss}  the minimal points of $\Sigma$ are discrete and so the Gauss curvature of $\Sigma$ equals 1 everywhere on $\Sigma$, therefore $\Sigma$ is compact by Bonnet-Myers theorem. Apply \autoref{lem:bochner} to obtain  that on $\Sigma$
 \begin{eqnarray*}
 \dfrac12\Delta\abs{H}^2=\abs{\nabla(JH)}^2+\abs{H}^2.
 \end{eqnarray*}
Then the maximum principle implies that $H\equiv0$ which is a contradiction. Therefore $\Sigma$ is a minimal surface.
  \end{proof}

\section{Examples of csL Willmore surfaces in \texorpdfstring{$\mathbb{S}^5$}{the unit sphere}}

From the definition we see that complete Willmore Legendrian surfaces, which are minimal surface by  \autoref{main thm1} in the last section, are trivial examples of csL Willmore surfaces in $\mathbb{S}^5$. Thus it is very natural and important to construct nonminimal csL Willmore surfaces in $\mathbb{S}^5$. This will be done in this section by analyzing a very close relationship between csL Willmore surfaces and csL surfaces in $\mathbb{S}^5$.

Assume that $\Sigma$ is a csL Willmore surface in $\mathbb{S}^5$, then since the variation vector field on $\Sigma$ under Legendrian deformations can be written as $J\nabla u+\frac12u\textbf{R}$ for smooth function $u$ on $\Sigma$ (cf. \cite{Sm}*{Lemma 3.1}), we have
\begin{eqnarray*}
0&=&\int_\Sigma\hin{ \overrightarrow{W}(\Sigma)}{J\nabla u+\dfrac12u\textbf{R}}\dif\mu_\Sigma
\\ &=&\int_\Sigma\langle \overrightarrow{W}(\Sigma),J\nabla u\rangle d\Sigma+\int_\Sigma\hin{ \overrightarrow{W}(\Sigma)}{\dfrac12u\textbf{R}}\dif\mu_\Sigma
\\&=&\int_\Sigma \Div\left(J\overrightarrow{W}(\Sigma)-2JH\right)u\dif\mu_\Sigma,
\end{eqnarray*}
where in the last euqality we used $\hin{\overrightarrow{W}(\Sigma)}{ \textbf{R}}=-2 \Div(JH)$, by \autoref{pro0}. Therefore $\Sigma$ satisfies the following Euler-Lagrangian equation:
\begin{eqnarray}\label{equ6}
\Div\left(J\overrightarrow{W}(\Sigma)-2JH\right)=0.
\end{eqnarray}
\begin{rem}
Note that the coefficient of the Euler-Lagrangian equation \eqref{equ6}  for csL Willmore surfaces in $\mathbb{S}^5$ is slightly different with (1.7) in \cite{Luo}. That is because here we use the notation $H=\trace B$, whereas in \cite{Luo} we defined $H=\frac12\trace B$.
\end{rem}
Then by \eqref{equ4}, $\Sigma$ satisfies the following equation.
\begin{eqnarray*}
\Div\left(\nabla\Div(JH)+JB(JH,JH)-\dfrac12\abs{H}^2JH-4JH\right)=0.
\end{eqnarray*}
In addition, by the four-symmetric of $\left(\sigma_{ijk,l}\right)$ (see \eqref{tri-symm.}), a direct computation shows
\begin{align*}
    \Div(JB(JH,JH))=2\trace\hin{ B(\cdot,\nabla_\cdot(JH))}{H}+\dfrac12\nabla_{JH}\abs{H}^2.
\end{align*}
Therefore $\Sigma$ satisfies the following equation
\begin{eqnarray*}
\Delta\Div(JH)+2\trace\hin{ B(\cdot,\nabla_\cdot(JH))}{H}-\dfrac12\abs{H}^2\Div(JH)-4\Div(JH)=0.
\end{eqnarray*}

\begin{prop}\label{pro1}
Assume that $\Sigma$ is a csL surface in $\mathbb{S}^5$ and $\trace\hin{ B(\cdot,\nabla_\cdot(JH))}{H}=0$, then $\Sigma$ is a csL Willmore surface.
\end{prop}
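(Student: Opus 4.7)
The plan is to read off the conclusion almost directly from the Euler--Lagrange equation for csL Willmore surfaces that has just been derived in the paragraphs preceding \autoref{pro1}. That derivation expands \eqref{equ6} using the formula for $\overrightarrow{W}(\Sigma)$ from \autoref{pro0} together with the divergence identity $\Div(JB(JH,JH)) = 2\trace\hin{B(\cdot,\nabla_\cdot(JH))}{H} + \tfrac12\nabla_{JH}|H|^2$, and it establishes that $\Sigma$ is csL Willmore if and only if
\begin{equation*}
\Delta\Div(JH) + 2\trace\hin{B(\cdot,\nabla_\cdot(JH))}{H} - \tfrac12|H|^2\Div(JH) - 4\Div(JH) = 0.
\end{equation*}

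My first step will be to invoke the csL hypothesis, which by \autoref{CSL} is equivalent to $\Div(JH) = 0$ pointwise on $\Sigma$. Substituting this into the displayed equation annihilates three of the four terms at once: $\Delta\Div(JH) = 0$, $\tfrac12|H|^2\Div(JH) = 0$, and $4\Div(JH) = 0$. What remains is the single term $2\trace\hin{B(\cdot,\nabla_\cdot(JH))}{H}$, and this vanishes by the second hypothesis of \autoref{pro1}. Hence \eqref{equ6} is satisfied, so $\Sigma$ is a csL Willmore surface.

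Since the genuinely computational work has already been carried out in the derivation of the displayed equation, there is no real obstacle: the argument is a two-line substitution once the csL condition is in hand. The only point worth verifying along the way is that the derivation leading to the displayed equation is actually reversible, i.e.\ that it produces an equivalence rather than merely a necessary condition. This is clear because the Legendrian variation field $J\nabla u + \tfrac12 u\textbf{R}$ ranges over all smooth functions $u$ on $\Sigma$, so the fundamental lemma of the calculus of variations forces the integrand $\Div(J\overrightarrow{W}(\Sigma) - 2JH)$ to vanish pointwise. With that noted, the proof collapses to the two substitutions described above.
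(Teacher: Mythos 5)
Your proof is correct and follows essentially the same route as the paper: the paper gives no separate proof of this proposition, treating it as an immediate consequence of the equation $\Delta\Div(JH)+2\trace\hin{B(\cdot,\nabla_\cdot(JH))}{H}-\tfrac12\abs{H}^2\Div(JH)-4\Div(JH)=0$ derived just before the statement, into which one substitutes $\Div(JH)=0$ (the csL condition) and the vanishing-trace hypothesis. Your remark that the derivation is an equivalence, via the fundamental lemma of the calculus of variations applied to the contact variation fields $J\nabla u+\tfrac12 u\textbf{R}$, is exactly the point that makes the substitution sufficient and not merely necessary.
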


With the aid of \autoref{pro1}, we can find the following examples of csL Willmore surfaces from csL surfaces in $\mathbb{S}^5$. Firstly, according to \autoref{pro1},  all closed Legendrian surfaces with parallel tangent vector field $JH$, which are exactly minimal surfaces or the Calabi tori (cf. \cite{LS}*{Proposition 3.2}),   are csL Willmore surfaces. For reader's convenience, we give some detailed computations as follows.

\begin{eg}[Calabi tori]
For every four nonzero real numbers $r_1, r_2, r_3, r_4$ with $r_1^2+r_2^2=r_3^2+r_4^2=1$, the Calabi torus $\Sigma$ is a csL surface in $\mathbb{S}^5$ defined as follows. 
\begin{align*}
F:&\mathbb{S}^1\times\mathbb{S}^{1} \mapsto \mathbb{S}^5,\\
(t,s)\mapsto&\left(r_1r_3\exp\left(\sqrt{-1}\left(\dfrac{r_2}{r_1}t
+\dfrac{r_4}{r_3}s\right)\right),r_1r_4\exp\left(\sqrt{-1}\left(\dfrac{r_2}{r_1}t-\dfrac{r_3}{r_4}s\right)\right),
r_2\exp\left(-\sqrt{-1}\dfrac{r_1}{r_2}t\right)\right).
\end{align*}
Denote
\begin{align*}
\phi_1=\exp\left(\sqrt{-1}\left(\dfrac{r_2}{r_1}t+\dfrac{r_4}{r_3}s\right)\right),\quad\phi_2
=\exp\left(\sqrt{-1}\left(\dfrac{r_2}{r_1}t-\dfrac{r_3}{r_4}s\right)\right),\quad\phi_3=\exp\left(-\sqrt{-1}\dfrac{r_1}{r_2}t\right),
\end{align*}
then $F(t,s)=\left(r_1r_3\phi_1, r_1r_4\phi_2, r_2\phi_3\right)$. Since
\begin{align*}
\dfrac{\partial F}{\partial t}=\left(\sqrt{-1}r_2r_3\phi_1, \sqrt{-1}r_2r_4\phi_2, -\sqrt{-1}r_1\phi_3\right),\\
\dfrac{\partial F}{\partial s}=\left(\sqrt{-1}r_1r_4\phi_1, -\sqrt{-1}r_1r_3\phi_2, 0\right),
\end{align*}
the induced metric in $\Sigma$ is given by
\begin{align*}
g=dt^2+r_1^2ds^2.
\end{align*}
Let $E_1=\frac{\partial F}{\partial t}, E_2=\frac{1}{r_1}\frac{\partial F}{\partial s}$, then $\{E_1, E_2, \nu_1=\sqrt{-1}E_1, \nu_2=\sqrt{-1}E_2, \textbf{R}=-\sqrt{-1}F\}$ is a local orthonormal frame of $\mathbb{S}^5$ such that $\{E_1,E_2\}
$ is a local orthonormal tangent frame and $\textbf{R}$ is the Reeb field. A direct calculation yields\begin{align*}
\dfrac{\partial\nu_1}{\partial t}=&\left(-\sqrt{-1}\dfrac{r_2^2r_3}{r_1}\phi_1, -\sqrt{-1}\dfrac{r_2^2r_4}{r_1}\phi_2, -\sqrt{-1}\dfrac{r_1^2}{r_2}\phi_3\right),\\
\dfrac{\partial\nu_1}{\partial s}=&\left(-\sqrt{-1}\dfrac{r_2r_3^2}{r_4}\phi_1, \sqrt{-1}\dfrac{r_2r_4^2}{r_3}\phi_2,0\right),\\
\dfrac{\partial\nu_2}{\partial t}=&\left(-\sqrt{-1}\dfrac{r_2r_4}{r_1}\phi_1, \sqrt{-1}\dfrac{r_2r_3}{r_1}\phi_2, 0\right),\\
\dfrac{\partial\nu_2}{\partial s}=&\left(-\sqrt{-1}\dfrac{r_4^2}{r_3}\phi_1,-\sqrt{-1}\dfrac{r_3^2}{r_4}\phi_2, 0\right),\\
\dfrac{\partial\textbf{R}}{\partial t}=&\left(r_2r_3\phi_1, r_2r_4\phi_2, -r_1\phi_3\right),\\
\dfrac{\partial\textbf{R}}{\partial s}=&\left(r_1r_4\phi_1, -r_1r_3\phi_2,0\right).
\end{align*}
Hence,
\begin{align*}
A^{\nu_1}=&-\Re\langle dF,d\nu_1\rangle=\left(\dfrac{r_2}{r_1}-\dfrac{r_1}{r_2}\right)dt^2+r_1r_2ds^2,\\
A^{\nu_2}=&-\Re\langle dF,d\nu_2\rangle=2r_2dtds+r_1\left(\dfrac{r_4}{r_3}-\dfrac{r_3}{r_4}\right)ds^2,\\
A^{\textbf{R}}=&0.
\end{align*}

Thus
\begin{align*}
H=&\left(\dfrac{2r_2}{r_1}-\dfrac{r_1}{r_2}\right)\nu_1+\dfrac{1}{r_1}\left(\dfrac{r_4}{r_3}-\dfrac{r_3}{r_4}\right)\nu_2.
\end{align*}
Moreover $E_1$ and $E_2$ are two parallel tangent vector field. It is obvious that $\Sigma$ is a csL Willmore surface.
\end{eg}

Secondly, we give some examples that $JH$ is not parallel. Mironov \cite{Mir} constructed some new csL surfaces in $\mathbb{S}^5$. We can verify that Mironov's  examples are in fact csL Willmore surfaces.   

\begin{eg} [Mironov's examples \cite{Mir}] Let $F:\Sigma^2\mapsto\mathbb{S}^5$ be an immersion. Then $F$ is a Legendrian immersion iff
\begin{align*}
\langle F_x,F\rangle=\langle F_y,F\rangle=0.
\end{align*}
Here $\{x,y\}$ is a local coordinates of $\Sigma$ and $\langle,\rangle$ stands for the hermitian inner product in $\mathbb{C}^3$. Set
\begin{align*}
    G=\begin{pmatrix}F\\
    F_x\\
    F_y
    \end{pmatrix},
\end{align*}
then
\begin{align*}
    G\bar G^T=\begin{pmatrix}1&0&0\\
    0&\langle F_x, F_x\rangle&\langle F_x,F_y\rangle\\
    0&\langle F_y,F_x\rangle&\langle F_y,
    F_y\rangle
    \end{pmatrix}:=
    \begin{pmatrix}1&0\\
    0&g
    \end{pmatrix},
\end{align*}
where $g$ is a real positive matrix which is the induce metric of  $\Sigma$. There is a hermitian matrix $\Theta$ such that
\begin{align*}
    G=\begin{pmatrix}1&0\\
    0&g^{1/2}
    \end{pmatrix}e^{\sqrt{-1}\Theta}.
\end{align*}
We compute
\begin{align*}
G\bar G^{T}_x=&\begin{pmatrix}0&-\langle F_x,F_{x}\rangle&-\langle F_x,
F_{y}\rangle\\
\langle F_x,F_x\rangle&\langle F_x,F_{xx}\rangle&\langle F_x,F_{yx}\rangle\\
\langle F_y,F_x\rangle&\langle F_y,F_{xx}\rangle&\langle F_y,F_{yx}\rangle\\
\end{pmatrix}\\
&=\begin{pmatrix}1&0\\
0&\sqrt{g}
\end{pmatrix}e^{\sqrt{-1}\Theta}\left(e^{-\sqrt{-1}\Theta}\right)_x\begin{pmatrix}1&0\\
0&\sqrt{g}
\end{pmatrix}+\begin{pmatrix}1&0\\
0&\sqrt{g}
\end{pmatrix}\begin{pmatrix}0&0\\
0&\left(\sqrt{g}\right)_x
\end{pmatrix}.
\end{align*}
Hence
\begin{align*}
\Re\left(\sqrt{-1}G\bar G^{T}_x\right)=&\Re\sqrt{-1}\begin{pmatrix}0&0&0\\
0&\langle F_x,F_{xx}\rangle&\langle F_x,F_{yx}\rangle\\
0&\langle F_y,F_{xx}\rangle&\langle F_y,F_{yx}\rangle\\
\end{pmatrix}\\
=&\begin{pmatrix}0&0\\
0&A^{\sqrt{-1}F_x}
\end{pmatrix}\\
&=\begin{pmatrix}1&0\\
0&\sqrt{g}
\end{pmatrix}\Re\left(\sqrt{-1}e^{\sqrt{-1}\Theta}\left(e^{-\sqrt{-1}\Theta}\right)_x\right)\begin{pmatrix}1&0\\
0&\sqrt{g}
\end{pmatrix},
\end{align*}
which implies
\begin{align*}
\begin{pmatrix}0&0\\
0&g^{-1/2}A^{\sqrt{-1}F_x}g^{1/2}
\end{pmatrix}=\Re\left(\sqrt{-1}e^{\sqrt{-1}\Theta}\left(e^{-\sqrt{-1}\Theta}\right)_x\right).
\end{align*}
Similarly,
\begin{align*}
\begin{pmatrix}0&0\\
0&g^{-1/2}A^{\sqrt{-1}F_y}g^{1/2}
\end{pmatrix}=\Re\left(\sqrt{-1}e^{\sqrt{-1}\Theta}\left(e^{-\sqrt{-1}\Theta}\right)_y\right).
\end{align*}
The Lagrangian angle is then given by $\theta=tr\Re\Theta$. The above discussion implies that
\begin{align*}
    J\nabla\theta=H.
\end{align*}

Let $a, b, c$ are three positive constants and consider the following immersion
\begin{align*}
F: &\mathbb{S}^1\times\mathbb{S}^1\mapsto\mathbb{S}^5,
\\(x,y)&\mapsto\left(\phi(x) e^{\sqrt{-1}ay}, \psi(x) e^{\sqrt{-1}by}, \zeta(x) e^{-\sqrt{-1}cy}\right),
\end{align*}
where
\begin{align*}
&\phi(x)=\sqrt{\dfrac{c}{a+c}}\sin x,
\\& \psi(x)=\sqrt{\dfrac{c}{b+c}}\cos x,\\
&\zeta(x)=\sqrt{\dfrac{a\sin^2x}{a+c}+\dfrac{b\cos^2x}{b+c}}=\sqrt{\dfrac{ab+u(x)}{(a+c)(b+c)}},
\end{align*}
where
\begin{align*}
u(x)=\dfrac{c\left(a+b+(b-a)\cos(2x)\right)}{2}.
\end{align*}
One can check that $F$ is a Legendrian immersion. Denote $\Sigma:=F\left(\mathbb{S}^1\times\mathbb{S}^1\right)$.
Notice that
\begin{align*}
F_x=&\left(\sqrt{\dfrac{c}{a+c}}\cos x e^{\sqrt{-1}ay}, -\sqrt{\dfrac{c}{b+c}}\sin x e^{\sqrt{-1}by}, \dfrac{-c(b-a)\sin(2x)}{2\sqrt{(a+c)(b+c)(ab+u(x))}}e^{-\sqrt{-1}cy}\right),\\
F_y=&\left(\sqrt{-1}a\phi(x)e^{\sqrt{-1}ay},\sqrt{-1}b\psi(x) e^{\sqrt{-1}by}, -\sqrt{-1}c\zeta(x)e^{-\sqrt{-1}cy}\right).
\end{align*} The induced metric $g$ is given by
\begin{align*}
g=&\left[\dfrac{c\cos^2x}{a+c}+\dfrac{c\sin^2x}{b+c}+\dfrac{c^2(b-a)^2\sin^2(2x)}{4(a+c)(b+c)(ab+u(x))}\right]dx^2\\
&+\left[a^2\times\dfrac{c\sin^2x}{a+c}+b^2\times\dfrac{c\cos^2x}{b+c}+c^2\left(\dfrac{a\sin^2x}{a+c}+\dfrac{b\cos^2x}{b+c}\right)\right]dy^2\\
=&\dfrac{u(x)}{ab+u(x)}dx^2+u(x)dy^2\\
:=& e^{2p(x)}dx^2+e^{2q(x)}dy^2.
\end{align*}
A strait forward calculation yields that
\begin{align*}
A^{\sqrt{-1}F_x}=\Re\begin{pmatrix}0&\sqrt{-1}\langle F_x,F_{xy}\rangle\\
-\sqrt{-1}\langle F_{xy},F_x\rangle&0\end{pmatrix}=\begin{pmatrix}0&c\left(1-e^{2p(x)}\right)\\
c\left(1-e^{2p(x)}\right)&0\end{pmatrix},\\
A^{\sqrt{-1}F_y}=\Re\begin{pmatrix}\sqrt{-1}\langle F_x,F_{yx}\rangle&0\\
0&\sqrt{-1}\langle F_y,F_{yy}\rangle\end{pmatrix}=\begin{pmatrix}c\left(1-e^{2p(x)}\right)&0\\
0&(a+b-c)e^{2q(x)}-abc\end{pmatrix}.
\end{align*}
We get
\begin{align*}
    \Re\left(\sqrt{-1}e^{\sqrt{-1}\Theta}\left(e^{-\sqrt{-1}\Theta}\right)_x\right)=\begin{pmatrix}0&0&0\\
    0&0&\dfrac{abc}{u\sqrt{ab+u}}\\
    0&\dfrac{abc}{u\sqrt{ab+u}}&0
    \end{pmatrix},\\
    \Re\left(\sqrt{-1}e^{\sqrt{-1}\Theta}\left(e^{-\sqrt{-1}\Theta}\right)_y\right)=\begin{pmatrix}0&0&0\\
    0&\dfrac{abc}{u}&0\\
    0&0&(a+b-c)-\dfrac{abc}{u}
    \end{pmatrix}.
\end{align*}
Thus
\begin{align*}
H^{\sqrt{-1}F_x}=0,\quad H^{\sqrt{-1}F_y}=a+b-c.
\end{align*}
We get
\begin{align*}
H=\dfrac{a+b-c}{u(x)}\sqrt{-1}\dfrac{\partial}{\partial y},
\end{align*}
and
\begin{align*}
\nabla_{\partial_x}\left(\sqrt{-1}H\right)=\dfrac{(a+b-c)u_x}{2u^2}\dfrac{\partial}{\partial y},\quad\nabla_{\partial_y}\left(\sqrt{-1}H\right)=\dfrac{(ab+u)(a+b-c)u_x}{2u^2}\dfrac{\partial}{\partial x}.
\end{align*}
In particular
\begin{align*}
\Div\left(\sqrt{-1}H\right)=0.
\end{align*}
Hence $\Sigma$ is csL. Moreover
\begin{align*}
\sum_{i=1}^2\hin{ B\left(e_i,\nabla_{e_i}\left(JH\right)\right)}{H}=0.
\end{align*}
Therefore, $\Sigma$ is a csL Willmore surface in $\mathbb{S}^5$.
\end{eg}


\begin{bibdiv}
\begin{biblist}

\bib{BaK}{article}{
      author={Bauer, Matthias},
      author={Kuwert, Ernst},
       title={Existence of minimizing {W}illmore surfaces of prescribed genus},
        date={2003},
        ISSN={1073-7928},
     journal={Int. Math. Res. Not.},
      number={10},
       pages={553\ndash 576},
         url={https://doi.org/10.1155/S1073792803208072},
      review={\MR{1941840}},
}

\bib{Bl}{book}{
      author={Blair, David~E.},
       title={Riemannian geometry of contact and symplectic manifolds},
     edition={Second},
      series={Progress in Mathematics},
   publisher={Birkh\"{a}user Boston, Inc., Boston, MA},
        date={2010},
      volume={203},
        ISBN={978-0-8176-4958-6},
         url={https://doi.org/10.1007/978-0-8176-4959-3},
      review={\MR{2682326}},
}

\bib{CLU}{article}{
      author={Castro, Ildefonso},
      author={Li, Haizhong},
      author={Urbano, Francisco},
       title={Hamiltonian-minimal {L}agrangian submanifolds in complex space
  forms},
        date={2006},
        ISSN={0030-8730},
     journal={Pacific J. Math.},
      volume={227},
      number={1},
       pages={43\ndash 63},
         url={https://doi.org/10.2140/pjm.2006.227.43},
      review={\MR{2247872}},
}

\bib{CU93}{article}{
      author={Castro, Ildefonso},
      author={Urbano, Francisco},
       title={Lagrangian surfaces in the complex {E}uclidean plane with
  conformal {M}aslov form},
        date={1993},
        ISSN={0040-8735},
     journal={Tohoku Math. J. (2)},
      volume={45},
      number={4},
       pages={565\ndash 582},
         url={https://doi.org/10.2748/tmj/1178225850},
      review={\MR{1245723}},
}

\bib{CU}{article}{
      author={Castro, Ildefonso},
      author={Urbano, Francisco},
       title={Willmore surfaces of {$\mathbb{ R}^4$} and the {W}hitney sphere},
        date={2001},
        ISSN={0232-704X},
     journal={Ann. Global Anal. Geom.},
      volume={19},
      number={2},
       pages={153\ndash 175},
         url={https://doi.org/10.1023/A:1010720100464},
      review={\MR{1826399}},
}

\bib{Chen}{article}{
      author={Chen, Bang-Yen},
       title={On the total curvature of immersed manifolds. {VI}.
  {S}ubmanifolds of finite type and their applications},
        date={1983},
        ISSN={0304-9825},
     journal={Bull. Inst. Math. Acad. Sinica},
      volume={11},
      number={3},
       pages={309\ndash 328},
      review={\MR{726979}},
}

\bib{Gei}{book}{
      author={Geiges, Hansj\"{o}rg},
       title={An introduction to contact topology},
      series={Cambridge Studies in Advanced Mathematics},
   publisher={Cambridge University Press, Cambridge},
        date={2008},
      volume={109},
        ISBN={978-0-521-86585-2},
         url={https://doi.org/10.1017/CBO9780511611438},
      review={\MR{2397738}},
}

\bib{Ir}{article}{
      author={Iriyeh, Hiroshi},
       title={Hamiltonian minimal {L}agrangian cones in {$\mathbb{C}^m$}},
        date={2005},
        ISSN={0387-3870},
     journal={Tokyo J. Math.},
      volume={28},
      number={1},
       pages={91\ndash 107},
         url={https://doi.org/10.3836/tjm/1244208282},
      review={\MR{2149626}},
}

\bib{Jo}{book}{
      author={Jost, J\"{u}rgen},
       title={Riemannian geometry and geometric analysis},
     edition={Seventh},
      series={Universitext},
   publisher={Springer, Cham},
        date={2017},
        ISBN={978-3-319-61859-3; 978-3-319-61860-9},
         url={https://doi.org/10.1007/978-3-319-61860-9},
      review={\MR{3726907}},
}

\bib{Le}{article}{
      author={L\^{e}, H\^{o}ng-V\^{a}n},
       title={A minimizing deformation of {L}egendrian submanifolds in the
  standard sphere},
        date={2004},
        ISSN={0926-2245},
     journal={Differential Geom. Appl.},
      volume={21},
      number={3},
       pages={297\ndash 316},
         url={https://doi.org/10.1016/j.difgeo.2004.05.004},
      review={\MR{2091366}},
}

\bib{Luo}{article}{
      author={Luo, Yong},
       title={On {W}illmore {L}egendrian surfaces in {$\mathbb{S}^5$} and the
  contact stationary {L}egendrian {W}illmore surfaces},
        date={2017},
        ISSN={0944-2669},
     journal={Calc. Var. Partial Differential Equations},
      volume={56},
      number={3},
       pages={Art. 86, 19},
         url={https://doi.org/10.1007/s00526-017-1183-z},
      review={\MR{3658339}},
}

\bib{LS}{article}{
      author={Luo, Yong},
      author={Sun, Linlin},
       title={{Rigidity of closed {CSL} submanifolds in the unit sphere}},
        date={2018Nov},
     journal={arXiv e-prints},
       pages={arXiv:1811.02839},
      eprint={1811.02839},
}

\bib{LW}{article}{
      author={Luo, Yong},
      author={Wang, Guofang},
       title={On geometrically constrained variational problems of the
  {W}illmore functional {I}. {T}he {L}agrangian-{W}illmore problem},
        date={2015},
        ISSN={1019-8385},
     journal={Comm. Anal. Geom.},
      volume={23},
      number={1},
       pages={191\ndash 223},
         url={https://doi.org/10.4310/CAG.2015.v23.n1.a6},
      review={\MR{3291368}},
}

\bib{MMZ}{article}{
      author={Ma, Hui},
      author={Mironov, Andrey~E.},
      author={Zuo, Dafeng},
       title={An energy functional for {L}agrangian tori in {$\mathbb{CP}^2$}},
        date={2018},
        ISSN={0232-704X},
     journal={Ann. Global Anal. Geom.},
      volume={53},
      number={4},
       pages={583\ndash 595},
         url={https://doi.org/10.1007/s10455-017-9589-6},
      review={\MR{3803341}},
}

\bib{MN}{article}{
      author={Marques, Fernando~C.},
      author={Neves, Andr\'{e}},
       title={Min-max theory and the {W}illmore conjecture},
        date={2014},
        ISSN={0003-486X},
     journal={Ann. of Math. (2)},
      volume={179},
      number={2},
       pages={683\ndash 782},
         url={https://doi.org/10.4007/annals.2014.179.2.6},
      review={\MR{3152944}},
}

\bib{Mi}{article}{
      author={Minicozzi, William~P., II},
       title={The {W}illmore functional on {L}agrangian tori: its relation to
  area and existence of smooth minimizers},
        date={1995},
        ISSN={0894-0347},
     journal={J. Amer. Math. Soc.},
      volume={8},
      number={4},
       pages={761\ndash 791},
         url={https://doi.org/10.2307/2152828},
      review={\MR{1311825}},
}

\bib{Mir}{article}{
      author={Mironov, A.~E.},
       title={New examples of {H}amilton-minimal and minimal {L}agrangian
  submanifolds in {$\mathbb{C}^n$} and {$\mathbb{CP}^n$}},
        date={2004},
        ISSN={0368-8666},
     journal={Mat. Sb.},
      volume={195},
      number={1},
       pages={89\ndash 102},
         url={https://doi.org/10.1070/SM2004v195n01ABEH000794},
      review={\MR{2058378}},
}

\bib{MU}{article}{
      author={Montiel, Sebasti\'{a}n},
      author={Urbano, Francisco},
       title={A {W}illmore functional for compact surfaces in the complex
  projective plane},
        date={2002},
        ISSN={0075-4102},
     journal={J. Reine Angew. Math.},
      volume={546},
       pages={139\ndash 154},
         url={https://doi.org/10.1515/crll.2002.039},
      review={\MR{1900995}},
}

\bib{Oh90}{article}{
      author={Oh, Yong-Geun},
       title={Second variation and stabilities of minimal {L}agrangian
  submanifolds in {K}\"{a}hler manifolds},
        date={1990},
        ISSN={0020-9910},
     journal={Invent. Math.},
      volume={101},
      number={2},
       pages={501\ndash 519},
         url={https://doi.org/10.1007/BF01231513},
      review={\MR{1062973}},
}

\bib{Oh93}{article}{
      author={Oh, Yong-Geun},
       title={Volume minimization of {L}agrangian submanifolds under
  {H}amiltonian deformations},
        date={1993},
        ISSN={0025-5874},
     journal={Math. Z.},
      volume={212},
      number={2},
       pages={175\ndash 192},
         url={https://doi.org/10.1007/BF02571651},
      review={\MR{1202805}},
}

\bib{Pin}{article}{
      author={Pinkall, U.},
       title={Hopf tori in {$S^3$}},
        date={1985},
        ISSN={0020-9910},
     journal={Invent. Math.},
      volume={81},
      number={2},
       pages={379\ndash 386},
         url={https://doi.org/10.1007/BF01389060},
      review={\MR{799274}},
}

\bib{Sim}{article}{
      author={Simon, Leon},
       title={Existence of surfaces minimizing the {W}illmore functional},
        date={1993},
        ISSN={1019-8385},
     journal={Comm. Anal. Geom.},
      volume={1},
      number={2},
       pages={281\ndash 326},
         url={https://doi.org/10.4310/CAG.1993.v1.n2.a4},
      review={\MR{1243525}},
}

\bib{Sm}{article}{
      author={Smoczyk, Knut},
       title={Closed {L}egendre geodesics in {S}asaki manifolds},
        date={2003},
        ISSN={1076-9803},
     journal={New York J. Math.},
      volume={9},
       pages={23\ndash 47},
         url={http://nyjm.albany.edu:8000/j/2003/9_23.html},
      review={\MR{2016178}},
}

\bib{Tho}{article}{
      author={Thomsen, G.},
       title={Grundlagen der konformen fl\"{a}chentheorie},
        date={1924},
        ISSN={0025-5858},
     journal={Abh. Math. Sem. Univ. Hamburg},
      volume={3},
      number={1},
       pages={31\ndash 56},
         url={https://doi.org/10.1007/BF02954615},
      review={\MR{3069418}},
}

\bib{Wei}{article}{
      author={Weiner, Joel~L.},
       title={On a problem of {C}hen, {W}illmore, et al},
        date={1978},
        ISSN={0022-2518},
     journal={Indiana Univ. Math. J.},
      volume={27},
      number={1},
       pages={19\ndash 35},
         url={https://doi.org/10.1512/iumj.1978.27.27003},
      review={\MR{467610}},
}

\bib{Whi}{article}{
      author={White, James~H.},
       title={A global invariant of conformal mappings in space},
        date={1973},
        ISSN={0002-9939},
     journal={Proc. Amer. Math. Soc.},
      volume={38},
       pages={162\ndash 164},
         url={https://doi.org/10.2307/2038790},
      review={\MR{324603}},
}

\bib{Wil}{article}{
      author={Willmore, T.~J.},
       title={Note on embedded surfaces},
        date={1965},
        ISSN={0041-9109},
     journal={An. \c{S}ti. Univ. ``Al. I. Cuza" Ia\c{s}i Sec\c{t}. I a Mat.
  (N.S.)},
      volume={11B},
       pages={493\ndash 496},
      review={\MR{202066}},
}

\end{biblist}
\end{bibdiv}

\end{document}